\newtheorem{theorem}{Theorem}[section]
\begin{document}
\title{Answer to a question concerning Euler's paper {}"Variae considerationes circa series hypergeometricas"{}}
\author{Alexander Aycock}
\date{ }
\maketitle

\begin{abstract}
We solve a problem concerning Euler's paper "Variae considerationes circa series hypergeometricas"{} (\cite{E661}), as suggested by G. Faber in the preface to Volume 16,2 of the first series of Euler's Opera Omnia. Our solution employs methods introduced by Euler at other places.
\end{abstract}

\section{Introduction}
\label{sec: Introduction}

This paper is about Euler's work "Variae considerationes circa series hypergeometricas"{} (\cite{E661}). In that paper, Euler introduced the following function,\footnote{In the same paper, Euler also introduced two other related functions. But since they will not be needed for our purposes, we will not discuss them in the following.}  presented here in Euler's notation:

\begin{equation}\label{eq: Euler'sche Darstellung}
     \Gamma:i = a\cdot (a+b) \cdot (a+2b) \cdot (a+3b)\cdots (a+(i-1)b).
\end{equation}
Using the Euler-Maclaurin summation formula\footnote{Euler obtained his version of the Euler-Maclaurin summation formula in his treatise \cite{E47}.}, Euler arrived at the following expression -- in his notation -- for the function given in Eq.\,(\ref{eq: Euler'sche Darstellung})

\begin{equation}\label{eq: Asymptotiken}
     \Gamma:i = A (a-b+bi)^{\frac{a}{b}+i-\frac{1}{2}}e^{-i} .
\end{equation}
Here, Euler understood $i$ as an infinitely large positive number such that the previous equation has to be understood as an asymptotic equation from a modern perspective. The constant $A$ cannot be defined by means of the Euler-Maclaurin summation formula and Euler was not able  to find it in any other way\footnote{The author of this paper was not able to locate a source in which this constant is actually evaluated.}.
This induced G. Faber (in the introduction to volume 16,2 of the first series of Euler's Opera Onmia) to the following statement:\\[2mm]
\textit{"{}It might perhaps be worthwhile to take up again Euler's efforts to evaluate this number $A$".}\footnote{"Es würde sich vielleicht lohnen, die Euler'schen Bemühungen um die Ermittelung dieser Zahl $A$ wieder aufzunehmen." \cite[p.xliv]{Fa35}}\\[2mm]
In the following we shall present a solution to this task. In doing so, we intend to use only methods and ideas that were explicitly explained by Euler at some place.

\section{Solution of the Task}
\label{sec: Solution of the Task}

\subsection{Finding a relation among the functions $\Gamma_{E}$ and $\Gamma$}
\label{subsec: Herstellung einer Beziehung zwischen GammaE und Gamma}

In the following, we will denote Euler's function $\Gamma:i$ by the symbol $\Gamma_E(i)$ to distinguish it from the function that is indicated by $\Gamma$ nowadays (see equation (\ref{eq: Gamma-Funktion})). The main step towards the solution of the task at hand consists in relating the Eulerian function $\Gamma_E$ (equation (\ref{eq: Euler'sche Darstellung})) to the familiar $\Gamma$-function (equation (\ref{eq: Gamma-Funktion})). The $\Gamma$-function is nowadays usually defined by the expression

\begin{equation}\label{eq: Gamma-Funktion}
    \Gamma(x):= \int\limits_{0}^{\infty}t^{x-1}e^{-t}dt, \quad \text{for} \quad \operatorname{Re}(x)>0.
\end{equation}
Clearly, for integer values of $x$, $\Gamma(x)$ is a special case of $\Gamma_E(x)$ for $a=b=1$.
For the $\Gamma$-function an asymptotic expansion is provided by the Stirling formula. The final relation we are after is then found in Theorem \ref{Theorem: Beziehung}. 

The first step, therefore, will be to derive an integral representation for the function $\Gamma_E(x)$, using a method introduced by Euler in his papers \cite{E123} and \cite{E594}, papers actually devoted to the theory of continued fractions. In \cite[\S13, Ex.13]{E594}, Euler also arrived at the modern expression for the $\Gamma$-function. However, in most of his investigations Euler preferred the expression 

\begin{equation*}
    \int\limits_{0}^1 \log \left(\dfrac{1}{t}\right)^{x}dt,
\end{equation*}
which he discovered in \cite[\S14]{E19} and used for the interpolation of the factorial. By the substitution $\ln(\frac{1}{t})=u$, Euler's preferred expression reduces to the integral expressing $\Gamma(x+1)$. The notation $\Gamma$ for the integral in equation $(\ref{eq: Gamma-Funktion})$ was introduced by Legendre in his paper \cite{Le09} and popularised by his book \cite{Le26}.
%
%

We first intend to prove the following:

\begin{theorem}\label{Theorem: Integraldarstellung}
The Eulerian function $\Gamma_E$ has the following integral representation:

\begin{equation*}
    \Gamma_E(x)= \dfrac{b^x}{\Gamma\left(\frac{a}{b}\right)}\int\limits_{0}^{\infty}y^{x-1+\frac{a}{b}}e^{-y}dy \quad \text{for} \quad \operatorname{Re}\left(x-1+\frac{a}{b}\right)>0.
\end{equation*}
\end{theorem}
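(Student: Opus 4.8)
The plan is to reduce the claim to the elementary relation between Euler's product and the ordinary $\Gamma$-function, and then to recognise the integral on the right-hand side as $\Gamma\!\left(x+\tfrac{a}{b}\right)$. First I would factor a $b$ out of each of the $i$ factors in (\ref{eq: Euler'sche Darstellung}), writing
\begin{equation*}
\Gamma_E(i) = b^{i}\prod_{k=0}^{i-1}\left(\frac{a}{b}+k\right),
\end{equation*}
so that the product becomes a rising factorial with base $\tfrac{a}{b}$. This already isolates the factor $b^{x}$ appearing in the statement and replaces the two parameters $a,b$ by the single ratio $\tfrac{a}{b}$, which is the quantity that will enter the $\Gamma$-function.

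Following the method Euler employed in \cite{E123} and \cite{E594}, rather than merely quoting the product identity for $\Gamma$, I would argue through the recursion. Directly from (\ref{eq: Euler'sche Darstellung}) one reads off the functional equation $\Gamma_E(x+1)=(a+bx)\,\Gamma_E(x)$, since passing from $i$ to $i+1$ factors appends the term $a+ib$. I would then set $I(x):=\frac{b^{x}}{\Gamma(a/b)}\int_{0}^{\infty}y^{x-1+\frac{a}{b}}e^{-y}\,dy$, so that by (\ref{eq: Gamma-Funktion}) one has $I(x)=\frac{b^{x}}{\Gamma(a/b)}\Gamma\!\left(x+\tfrac{a}{b}\right)$, and verify by integration by parts in the defining integral — equivalently, via $\Gamma(z+1)=z\,\Gamma(z)$ applied to $z=x+\tfrac{a}{b}$ — that $I$ obeys the very same recursion $I(x+1)=(a+bx)\,I(x)$. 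A single initial value then pins the two functions together: at $x=1$ the product gives $\Gamma_E(1)=a$, while $I(1)=\frac{b}{\Gamma(a/b)}\Gamma\!\left(1+\tfrac{a}{b}\right)=\frac{b}{\Gamma(a/b)}\cdot\frac{a}{b}\,\Gamma\!\left(\tfrac{a}{b}\right)=a$, so the two agree there as well.

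The recursion together with this matching value establishes the closed form $\Gamma_E(x)=b^{x}\,\Gamma\!\left(x+\tfrac{a}{b}\right)/\Gamma\!\left(\tfrac{a}{b}\right)$, and hence the asserted integral representation, for every positive integer $x$. The step that genuinely requires care is the extension to non-integer $x$: the recursion alone does not determine the interpolation, since an arbitrary $1$-periodic factor could be inserted. I would resolve this exactly as Euler implicitly did, by adopting the right-hand side $b^{x}\,\Gamma\!\left(x+\tfrac{a}{b}\right)/\Gamma\!\left(\tfrac{a}{b}\right)$ as the definition of $\Gamma_E$ for complex arguments; that this is the intended interpolation — the one underlying the asymptotics (\ref{eq: Asymptotiken}) — is justified by the Bohr--Mollerup theorem, since $\Gamma$ is logarithmically convex on the positive axis and the factor $b^{x}$ is log-linear. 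The only remaining routine verification is the convergence of the integral at the lower limit, which is precisely what the hypothesis $\operatorname{Re}\!\left(x-1+\tfrac{a}{b}\right)>0$ secures.
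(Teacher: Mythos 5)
Your argument is correct, but it proceeds in the opposite logical order from the paper. The paper does not guess the closed form and verify it: it \emph{constructs} the integral representation from scratch using Euler's method for homogeneous difference equations with linear coefficients, making the ansatz $\int_c^d y^{x-1}P(y)\,dy$, deriving a system of ODEs for $P$ and the auxiliary function $Q$ by comparing coefficients of $x$, solving to get $P(y)=-\tfrac{C}{b}e^{-y/b}y^{a/b}$, fixing the limits $(c,d)=(0,\infty)$ from the boundary condition $d^xQ(d)-c^xQ(c)=0$, and determining $C$ from $\Gamma_E(1)=a$; the identity $\Gamma_E(x)=b^x\,\Gamma\bigl(x+\tfrac{a}{b}\bigr)/\Gamma\bigl(\tfrac{a}{b}\bigr)$ (Theorem \ref{Theorem: Beziehung}) then falls out as a corollary. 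You instead prove Theorem \ref{Theorem: Beziehung} first — via the recursion $I(x+1)=(a+bx)I(x)$, the matching value $I(1)=a$, and Bohr--Mollerup for uniqueness — and read off Theorem \ref{Theorem: Integraldarstellung} from the definition (\ref{eq: Gamma-Funktion}). Your route is shorter and is actually \emph{more} careful on the one genuinely delicate point: the paper handles the $1$-periodic ambiguity only by assuming log-convexity of $\Gamma_E$ without explicitly invoking a uniqueness theorem, whereas you close that gap (note that Bohr--Mollerup applies after the normalization $h(y):=b^{-y+a/b}\,\Gamma_E\bigl(y-\tfrac{a}{b}\bigr)$, which satisfies $h(y+1)=y\,h(y)$ and inherits log-convexity since $b^{x}$ is log-linear). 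What your approach gives up is the paper's stated program: Bohr--Mollerup (1922) was unavailable to Euler, while the constructive ansatz method is taken verbatim from \cite{E123} and \cite{E594}, and it explains \emph{why} an integral representation exists rather than presupposing the answer. One small caution: your fallback of ``adopting the right-hand side as the definition of $\Gamma_E$'' would render the theorem circular — the substantive version of your argument is the Bohr--Mollerup characterization, and it requires assuming $\Gamma_E$ log-convex, exactly the assumption the paper also makes.
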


\begin{proof}
From Euler's representation of  $\Gamma_E(x)$ (equation (\ref{eq: Euler'sche Darstellung})) we deduce the following functional equation

\begin{equation} \label{eq: Funktionalgleichung}
    \Gamma_E(x+1)=(a+bx)\Gamma_E(x),
\end{equation}
together with the initial condition $\Gamma_E(1)=a$. This functional equation interpolates the Eulerian expression for $\Gamma_E(x)$, which is only valid for natural numbers $x$. Furthermore, also based on the representation (\ref{eq: Euler'sche Darstellung}), we will assume that  $\Gamma_E(x)$ is logarithmically convex for positive real numbers $x $ satisfying $\operatorname{Re}\left(x-1+\frac{a}{b}\right)>0$. Then we can apply the above-mentioned Eulerian method of solving homogeneous difference equations with linear coefficients. 

According to this method Eq.\,(\ref{eq: Funktionalgleichung}) is solved by an expression of the form

\begin{equation}\label{eq: Ansatz}
    \int\limits_{c}^{d} y^{x-1}P(y)dy,  
\end{equation}
where the method also teaches how to find the boundaries of integration $c$ and $d$ and the function $P(t)$, which is assumed to be differentiable. \\[2mm]
In order to demonstrate the application of that method to the example at hand, let us consider the following auxiliary equation:

\begin{equation} \label{eq: Hilfsgleichung}
    \int^y y^{x}P(y)dy= (a+bx)\int^y y^{x-1}P(y)dy+ y^{x}Q(y),
\end{equation}
where $Q(y)$ is another differentiable function not known at this moment. By $\int^y f(y)dy$ we denote the integral function of $f$, i.e., if $F(y)$ is a primitive of $f(y)$, then $\int^y f(y)dy:= \int\limits_{y_0}^y f(t)dt=F(y)-F(y_0)$, where $y_0$ is to be chosen in such a way that $F(y_0)=0$. \\ Differentiating and dividing equation (\ref{eq: Hilfsgleichung}) through by $y^{x-1}$, we arrive at the following equation: 

\begin{equation} \label{eq: Hilfsgleichung2}
    yP(y)= (a+bx)P(y)+xQ(y)+yQ'(y),
\end{equation}
where, without loss of generality, $y \neq 0$ was assumed. Comparing coefficients of powers of $x$ on each of side of the equation, will lead us to the following system of differential equations for the functions $P(y)$ and $Q(y)$:


\begin{alignat}{2}
     yP(y) &= a P(y) + y&&Q'(y), \\
     0       &= b P(y)  \;+  &&Q(y).  
\end{alignat}

The solutions of this system are:

\begin{equation*}
    P(y) =-\dfrac{C}{b}e^{-\frac{y}{b}}y^{\frac{a}{b}} \quad \text{and} \quad Q(y)=Ce^{-\frac{y}{b}}y^{\frac{a}{b}},
\end{equation*}
where $C\neq 0$ is a constant of integration.\\[2mm]
Going back to the auxiliary equations (\ref{eq: Hilfsgleichung}) and (\ref{eq: Hilfsgleichung2}), we conclude that an ansatz of the form (\ref{eq: Ansatz}) will solve the difference equation (\ref{eq: Funktionalgleichung}) if we fix the the boundaries of integration, i.e., $c$ and $d$,  from solutions to the equation:
\begin{equation}
\label{eq: Boundaries}
    d^xQ(d)-c^xQ(c) =0.
\end{equation}
Under the conditions $\operatorname{Re}b>0$ and $\operatorname{Re}\left(x+\frac{a}{b}\right)>0$, aside from the trivial solution $c=d$, we find the pair\footnote{It would be interesting to investigate, whether there are even more solutions to equation (\ref{eq: Boundaries}), but since we only need one non-trivial solution, we will not consider this question here.}

\begin{equation*}
    (c,d)=(0,\infty).
\end{equation*}
Inserting everything we discovered into the ansatz (\ref{eq: Ansatz}), we find:

\begin{equation*}
    \Gamma_E(x)=-\dfrac{C}{b}\int\limits_{0}^{\infty}y^{x-1}e^{-\frac{y}{b}}y^{\frac{a}{b}}dy.
\end{equation*}
It remains to evaluate the constant $C$. For this, we will need the initial condition $\Gamma_E(1)=a$. Via the last equation and the initial condition we find:

\begin{equation*}
    \left(\Gamma_E(1)=\right)a= -\dfrac{C}{b}\int\limits_{0}^{\infty}y^{\frac{a}{b}}e^{-\frac{y}{b}}dy.
\end{equation*}
The integral on the right-hand side can be expressed using the $\Gamma$-function (\ref{eq: Gamma-Funktion}) by a simple rescaling of the integration variable such that we eventually arrive at the equation:

\begin{equation*}
    C=-\dfrac{ab^{-\frac{a}{b}}}{\Gamma\left(\frac{a}{b}+1\right)}.
\end{equation*}
Because of the relation $\Gamma(x+1)=x\Gamma(x)$ this simplifies even further to the expression:

\begin{equation*}
    C=-\dfrac{b^{-\frac{a}{b}+1}}{\Gamma\left(\frac{a}{b}\right)}.
\end{equation*}
If we insert this value of the constant of integration $C$ into the ansatz (\ref{eq: Ansatz}), we just have to perform another substitution $\frac{y}{b}\mapsto y$ in the integral. After a quick calculation, eventually we arrive at Theorem \ref{Theorem: Integraldarstellung} which we wanted to prove. 
\end{proof}

\subsubsection{Relation among $\Gamma$ and $\Gamma_E$}
\label{subsubsec: Beziehung zwischen Gamma und GammaE}

Via \ref{Theorem: Integraldarstellung} the function $\Gamma_E(x)$ can be expressed in terms of the function $\Gamma(x)$. More precisely, the integral in \ref{Theorem: Integraldarstellung} can be expressed by using the definition of the $\Gamma$-function (\ref{eq: Gamma-Funktion}) such that we have the following

\begin{theorem}\label{Theorem: Beziehung}
The following equation holds:

\begin{equation*}
    ´\Gamma_E(x)= \dfrac{b^x}{\Gamma\left(\frac{a}{b}\right)}\cdot\Gamma \left(x+\dfrac{a}{b}\right).
\end{equation*}
\end{theorem}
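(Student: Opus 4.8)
The plan is to derive Theorem \ref{Theorem: Beziehung} directly from Theorem \ref{Theorem: Integraldarstellung} by recognising the integral there as nothing other than the defining integral of the $\Gamma$-function. Having already established that
\begin{equation*}
    \Gamma_E(x) = \frac{b^x}{\Gamma\left(\frac{a}{b}\right)} \int_0^\infty y^{x-1+\frac{a}{b}} e^{-y}\, dy,
\end{equation*}
all that remains is to evaluate the integral on the right-hand side in closed form.

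First I would recall the definition (\ref{eq: Gamma-Funktion}) of the $\Gamma$-function, namely $\Gamma(s) = \int_0^\infty t^{s-1} e^{-t}\, dt$ valid for $\operatorname{Re}(s) > 0$, and compare its integrand $t^{s-1} e^{-t}$ with the integrand $y^{x-1+\frac{a}{b}} e^{-y}$ appearing above. The natural choice is to set $s := x + \frac{a}{b}$, so that the exponent $x - 1 + \frac{a}{b}$ of $y$ becomes exactly $s-1$. With this identification the integral is literally $\Gamma\left(x + \frac{a}{b}\right)$, and substituting back yields the asserted relation.

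The only point that genuinely requires a moment's care is matching the domains of validity. The integral defining $\Gamma(s)$ converges precisely when $\operatorname{Re}(s) > 0$, whereas Theorem \ref{Theorem: Integraldarstellung} is stated under the hypothesis $\operatorname{Re}\left(x - 1 + \frac{a}{b}\right) > 0$. I would note that this hypothesis gives $\operatorname{Re}(s) = \operatorname{Re}\left(x + \frac{a}{b}\right) > 1 > 0$, so the convergence condition for $\Gamma\left(x + \frac{a}{b}\right)$ is not merely satisfied but satisfied with room to spare; hence the identification is legitimate on the entire domain of Theorem \ref{Theorem: Integraldarstellung}.

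I do not expect any serious obstacle here: the substantive work was carried out in Theorem \ref{Theorem: Integraldarstellung}, where the integral representation was obtained via Euler's method for difference equations, and the present statement is essentially the clean reformulation of that result once the integral is named. Accordingly, the proof will be short, with the verification of the convergence condition being the only step meriting explicit mention.
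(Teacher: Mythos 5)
Your proposal is correct and coincides with the paper's own argument: the paper likewise obtains Theorem \ref{Theorem: Beziehung} immediately from Theorem \ref{Theorem: Integraldarstellung} by recognising the integral there as $\Gamma\left(x+\frac{a}{b}\right)$ via the definition (\ref{eq: Gamma-Funktion}). Your explicit check that the hypothesis $\operatorname{Re}\left(x-1+\frac{a}{b}\right)>0$ guarantees $\operatorname{Re}\left(x+\frac{a}{b}\right)>1>0$, so that the defining integral converges, is a small detail the paper leaves implicit but is entirely sound.
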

This relation will be useful for the determination of the constant $A$, which we will do next.

\subsection{Determination of the constant $A$}
\label{subsec: Determination of the constant A}

To determine the constant $A$ we will need the Stirling formula, i.e., in modern terms the following asymptotic:

\begin{equation*}
    \Gamma(x+1) \sim \sqrt{2\pi x}\cdot x^x \cdot e^{-x} \quad \text{for} \quad x \rightarrow \infty.
\end{equation*}
Writing $x-1$ instead of $x$, it reads:

\begin{equation*}
    \Gamma(x) \sim  \sqrt{2\pi}(x-1)^{\frac{1}{2}}\cdot (x-1)^{x-1}e^{-(x-1)} \quad \text{for} \quad x \rightarrow \infty.
\end{equation*}
Let us apply the Stirling formula in this form to our case, i.e., the formula of Theorem \ref{Theorem: Beziehung}. 
%
A direct application gives:

\begin{equation}\label{eq: Final Asymptotic}
    \Gamma_E\left(x\right)\sim \dfrac{b^x}{\Gamma\left(\frac{a}{b}\right)}\cdot \sqrt{2\pi}\left(x-1-\dfrac{a}{b}\right)^{\frac{1}{2}}\left(x+\dfrac{a}{b}-1\right)^{x+\frac{a}{b}-1}e^{-\left(x+\frac{a}{b}-1\right)}.
\end{equation}
If we massage the expression on the right-hand side into a form resembling Euler's expression (\ref{eq: Asymptotiken}), we end up with the asymptotic:

\begin{equation*}
    \Gamma_E(x) \sim \dfrac{\sqrt{2\pi}}{\Gamma\left(\frac{a}{b}\right)}\cdot b^{-\frac{1}{2}-\frac{a}{b}+1}\left(a+bx-b\right)^{-\frac{1}{2}+x+\frac{a}{b}}e^{-x} \cdot e^{-\frac{a}{b}+1},
\end{equation*}
and comparison to the Eulerian expression reveals the constant $A$ to be:

\begin{equation*}
    A= \dfrac{\sqrt{2\pi}}{\Gamma \left(\frac{a}{b}\right)}\cdot e^{-\frac{a}{b}+1}\cdot b^{\frac{1}{2}-\frac{a}{b}}.
\end{equation*}
Thus, the task given by G. Faber is solved.

\subsection{Final Remark}
\label{subsec: Final Remark}

 Since we only used methods and results already discovered by Euler without any general changes -- we only used modern mathematical language --, we achieved our goal to show that Euler himself could have arrived at the solution to Faber's task and he probably would not have been surprised by the result.
 
 A modern solution of Faber's task would quite possibly use the saddle point method, once Theorem \ref{Theorem: Integraldarstellung} is established. This way, one would not have to rewrite the integral in terms of the $\Gamma$-function and apply the Stirling formula, but would arrive at the final asymptotic asymptotic expansion (\ref{eq: Final Asymptotic}) immediately. But the saddle point method was invented after Euler. The origin of the method is due to Laplace \cite{La74}. Furthermore, we wish to add that Euler's assumption that an equation such as (\ref{eq: Funktionalgleichung}) is given as an integral (\ref{eq: Ansatz}) is correct, can be explained by the theory of the Mellin-transform, which was only introduced by Mellin in his paper \cite{Me96} in 1895, over 100 years after Euler's death in 1783. 

\section{Acknowledgement}
\label{sec: Acknowledgement}

This author of this paper is supported by the Euler-Kreis Mainz. The author
especially wants to thank Prof. T. Sauer, Johannes Gutenberg Universität
Mainz, both for very helpful suggestions concerning the presentation of the subject
and for proof-reading and revising the text.

\bigskip

\end{document}